\def\marker{\>\hbox{${\vcenter{\vbox{
    \hrule height 0.4pt\hbox{\vrule width 0.4pt height 6pt
    \kern6pt\vrule width 0.4pt}\hrule height 0.4pt}}}$}\>}
\def\gpic#1{#1
     \medskip\par\noindent{\centerline{\box\graph}} \medskip}
\def\floor#1{\left\lfloor #1 \right\rfloor}
\newtheorem{lemma}{Lemma}
\newtheorem{theorem}{Theorem}
\def\cycl{\mathit{D}}
\def\cyc{\mathit{C}}
\def\H{\mathit{H}}
\begin{document}
\title{Multigraphs with $\Delta\geq 3$ are Totally-(2$\Delta$-1)-Choosable}
\author{Daniel W.\ Cranston\footnote{University of Illinois, Urbana-Champaign; currently at The Center for Discrete Mathematics and Theoretical Computer Science (DIMACS), Rutgers, \texttt{dcransto@dimacs.rutgers.edu}} 
}
\date{October 25, 2008}
\maketitle
\begin{abstract}
The \textit{total graph} $T(G)$ of a multigraph $G$ has as its vertices the set of edges and vertices of $G$ and has an edge between two vertices if their corresponding elements are either adjacent or incident in $G$. 
We show that if $G$ has maximum degree $\Delta(G)$, then $T(G)$ is $(2\Delta(G)-1)$-choosable.  
We give a linear-time algorithm that produces such a coloring.  
The best previous general upper bound for $\Delta(G) > 3$ was
$\floor{\frac32\Delta(G)+2}$, by Borodin et al.  
When $\Delta(G)=4$, our algorithm gives a better upper bound.  When $\Delta(G)\in\{3,5,6\}$, our algorithm matches the best known bound.
However, because our algorithm is significantly simpler, it runs in linear time (unlike the algorithm of Borodin et al.).
\end{abstract}

Throughout this paper, $G$ is a loopless multigraph. 
For convenience, we refer to edges and vertices as \textit{elements} of the graph.
The \textit{total graph} $T(G)$ of a graph $G$ has as its vertices the set of edges and vertices of $G$ and has an edge between two vertices if their corresponding elements are either adjacent or incident in $G$. 
Let $L$ be an assignment of lists to the vertices of a graph $G$.  
If $G$ has a proper coloring such that each vertex $v$ gets a color from its list $L(v)$, then we say that $G$ has an $L$-coloring.  
If $G$ always has an $L$-coloring when each vertex has a list of size $k$, then we say that $G$ is \textit{$k$-list-colorable} (or $k$-choosable).
In this paper, we study the problem of list-coloring a total graph.
If a total graph $T(G)$ is $k$-list-colorable, we say that $G$ is totally-$k$-list-colorable (or totally-$k$-choosable).
Often, our algorithm will greedily color all but a few edges and vertices of $G$; we generally call this uncolored subgraph $H$.  This motivates the following definition.
For a graph $G$ and a subgraph $H$, we use $G-H$ to mean $G-(V(H)\cup E(H))$ (thus, edge $uv$ may be present in $G-H$ even if one or both of vertices $u$ and $v$ are missing).
We say a graph algorithm runs in \textit{linear time} if for fixed maximum degree the algorithm runs in time linear in the number of vertices of the graph.  

Let $\Delta(G)$ denote the maximum vertex degree of a graph $G$.
Juvan et al.~\cite{juvan} showed that if $\Delta(G)=3$, then $G$ is totally-5-choosable.  
Skulrattanakulchai and Gabow~\cite{gabow} used these ideas to show that if $\Delta(G)=3$, 
then we can construct a total-5-list-coloring in linear time.  In this paper, we extend these ideas
further to show that if $\Delta(G)\geq 3$, then we can construct a total-($2\Delta(G)-1$)-list-coloring in linear time.
The best previous upper bound for $\Delta(G) > 3$ was $\floor{\frac32\Delta(G)+2}$, by Borodin et al.~\cite{borodin}.  
When $\Delta(G)=4$, our algorithm gives a new upper bound.  When $\Delta(G)\in\{3,5,6\}$, our algorithm matches the best known bound.
However, because our algorithm is significantly simpler, it runs in linear time (unlike the algorithm of Borodin et al.).
In Lemma~\ref{main-lemma}, we show how to greedily construct a total-$(2\Delta(G)-1)$-coloring for almost all of $G$.  The rest of this paper shows that we can extend the coloring to all of $G$. 

\begin{lemma}
\label{main-lemma}
Let $G$ be a connected multigraph.
If $G$ contains a vertex $v$ with $d(v)<\Delta(G)$, or $G$ contains an edge with multiplicity at least 3,
then $G$ is totally-($2\Delta(G)-1$)-choosable.
\end{lemma}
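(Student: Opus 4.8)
The plan is to prove the lemma by a single greedy list-coloring of the elements of $G$, with no recoloring and no case-analysis endgame, so that the coloring is produced in one linear-time pass over $T(G)$. First I would record the basic degree bounds. A vertex $u$ satisfies $\deg_{T(G)}(u) = d(u) + (\text{number of distinct neighbors of } u) \le 2d(u) \le 2\Delta$, while an edge $e$ of multiplicity $m$ joining $u$ and $v$ satisfies $\deg_{T(G)}(e) = d(u)+d(v)-m+1 \le 2\Delta-m+1$. Hence the only elements whose $T(G)$-degree actually reaches $2\Delta$ are degree-$\Delta$ vertices with $\Delta$ distinct neighbors and simple edges between two degree-$\Delta$ vertices. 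For such an element I must leave two of its neighbors uncolored until the moment it is colored, and for an element of degree $2\Delta-1$ I must leave one; then a color always remains in a list of size $2\Delta-1$.

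The key observation is that each hypothesis produces a vertex $r$ with $\deg_{T(G)}(r)\le 2\Delta-2$. Indeed, a vertex $v$ with $d(v)<\Delta$ has $\deg_{T(G)}(v)\le 2(\Delta-1)$; and if $u$ is an endpoint of an edge of multiplicity $m\ge 3$, then $u$ has at most $d(u)-2$ distinct neighbors, so $\deg_{T(G)}(u)\le 2d(u)-2\le 2\Delta-2$. I would take such an $r$ as the root of a breadth-first spanning tree of $G$ (using connectivity) and color the elements in order of non-increasing distance from $r$, finishing with $r$ itself, which is colorable last by its degree bound. Within the step for a vertex $x$ I would color, in order, the edges from $x$ to already-processed vertices at the same level, then $x$, then the edges from $x$ to vertices nearer $r$. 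With this rule each non-root vertex keeps its parent vertex and its parent edge uncolored when it is reached; each edge to a nearer endpoint keeps that endpoint and that endpoint's parent edge uncolored; and each same-level edge keeps its later-processed endpoint and that endpoint's parent edge uncolored. In every case this supplies the two late neighbors a degree-$2\Delta$ element needs, so away from $r$ each element has at most $2\Delta-2$ colored neighbors when it is colored.

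The hard part will be the edges incident to the root, whose degree may be $2\Delta$ while $r$ is their only guaranteed late neighbor. In the low-degree case this is automatic, since an edge $rx$ has $\deg_{T(G)}(rx)=d(r)+d(x)\le 2\Delta-1$ and so needs only the single late neighbor $r$. In the multiplicity case I would root at an endpoint $u$ of the triple edge $uv$ and break ties at the first level so that $v$ is processed \emph{last} among the neighbors of $u$; then for every simple edge $uw$ the parallel edges $uv$, colored at $v$'s step and hence after $w$'s, are still uncolored and adjacent to $uw$ through $u$, giving the needed second late neighbor, while the parallel edges themselves have degree at most $2\Delta-2$ by the multiplicity bound and are never tight. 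The main technical point is checking that this one tie-break covers all remaining root-incident elements — additional multiple edges at $u$, the parallel edges among themselves, and $r$ colored last — but each reduces to one of the degree inequalities above, after which the greedy coloring succeeds.
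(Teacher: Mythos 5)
Your proposal is correct and takes essentially the same approach as the paper: a farthest-first greedy coloring rooted at the deficient vertex, where every element away from the root keeps two uncolored neighbors (its parent vertex and parent edge toward the root), and the root-incident edges are handled last, with the degree deficiency or the multiplicity of the parallel edges absorbing the final constraints. The only cosmetic difference is bookkeeping — the paper orders elements by distance in the subdivision graph $S(G)$ rather than by BFS levels in $G$ with your three-step per-vertex schedule — and your tie-break that processes $v$ last is exactly the paper's rule that the copies of the multiple edge are the last distance-one elements colored before the root.
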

\begin{proof}
Let $k=\Delta(G)$.  
Suppose that $d(v) < k$ or that vertex $v$ is incident to edge $e$ with multiplicity at least 3.
The subdivision graph $S(G)$, is formed by replacing each edge of $G$ with a 
path of length 2.  
(The subdivision graph $S(G)$ has the same vertex set as the total graph $T(G)$, but it has fewer edges.  If we begin with $S(G)$ and add an edge between each pair of vertices at distance 2, we form $T(G)$.)
For each element $x\in V(G)\cup E(G)$, let $f(x)$ be the distance from $x$ to $v$ in $S(G)$.  
We will color all the elements of $G$ sequentially.
Let $N(x)$ be the set of vertices in $S(G)$ that are distance at most 2 from $x$; these are the elements that can restrict the color of $x$ in $T(G)$.
Note that we always have $|N(x)|\leq 2k$. 
Greedily color each element $x$ in decreasing order of $f(x)$.  
If $f(x)\geq 2$, let $u$ and $w$ be the first and second vertices after $x$ on a shortest path from $x$ to $w$ in $S(G)$.  
Since $f(w) < f(u) < f(x)$, vertices $u$ and $w$ will be uncolored at the time that we color $x$.  
Thus at most $2k-2$ colors are restricted from being used on $x$; hence we have a color available for $x$.

The elements $x$ with $f(x)=1$ are edges incident to $v$.  
In the case of a multiple edge, the last elements with distance 1 that we color are the copies of this edge, and finally $v$.
When at least one edge remains uncolored, at most $2k-2$ restrictions are imposed.
For the last edge, the multiplicity or the degree restriction on $v$ implies that at most $2k-3$ restrictions are imposed.
For $v$ also, in either case at most $2k-2$ restrictions are imposed, so we can complete the coloring.
\end{proof}

For any cycle $\cyc$, by Lemma~1 we can totally-($2\Delta(G)-1$)-color $G-E(\cyc)$.  
Our plan is to greedily total-color all of $G$ except for a few edges and vertices; we call these uncolored elements $\H$.
Lemmas~\ref{juvan1} and~\ref{juvan2} (from Juvan et al.~\cite{juvan}) 
show how to extend the coloring to $\H$ for various choices of $\H$.

For convenience, Juvan et al.\ define \textit{halfedges} to be edges with only one endpoint.  We use halfedge to describe an edge of $H$ which has only one endpoint in $H$.  A halfedge is colored similarly to an edge; the only difference is that a halfedge has only one endpoint in $H$, so it has at most $\Delta(G)-1$ incident edges in $H$.

\begin{lemma}(\cite{juvan})
\label{juvan1}
Let $H$ be a cycle with a halfedge attached to each vertex.  
If $L$ is a list assignment for $H$ such that 

$$
|L(t)|\geq \left\{
\begin{array}{ll}
5, & \mbox{if $t$ is a full edge,} \\
4, & \mbox{if $t$ is a vertex,} \\
2, & \mbox{if $t$ is a halfedge,} 
\end{array}
\right.
$$
then $H$ admits an $L$-total-coloring.
\end{lemma}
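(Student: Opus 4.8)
The plan is to read the statement as an assertion about list-coloring the total graph $T(H)$, and to first record that the three list sizes $5,4,2$ are each exactly one less than the corresponding degree in $T(H)$. Writing the cycle as $v_1v_2\cdots v_nv_1$ with cycle edges $e_i=v_iv_{i+1}$ and halfedge $h_i$ attached at $v_i$ (indices mod $n$), one checks directly that in $T(H)$ the vertex $v_i$ has degree $5$ (neighbors $v_{i-1},v_{i+1},e_{i-1},e_i,h_i$), the full edge $e_i$ has degree $6$ (neighbors $v_i,v_{i+1},e_{i-1},e_{i+1},h_i,h_{i+1}$), and the halfedge $h_i$ has degree $3$ (neighbors $v_i,e_{i-1},e_i$). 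Since every element has a list whose size is exactly one less than its degree, neither a naive greedy bound nor the degree-choosability theorem applies directly, and the whole difficulty is concentrated in the single cycle.

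First I would fix a good ordering of the elements. Going around the cycle, I would color them in the order $h_1,v_1,e_1,h_2,v_2,e_2,\dots,h_n,v_n,e_n$, so that each halfedge $h_i$ is colored immediately after the incoming edge $e_{i-1}$ but before $v_i$ and $e_i$. The key point is that each $h_i$ then has at most one already-colored neighbor (namely $e_{i-1}$) when we reach it, so its list of size $2$ always leaves a free color; meanwhile each vertex and each full edge acquires at most three already-colored neighbors, comfortably below its list size of $4$ or $5$. Thus, away from the point where the cycle closes up, the coloring extends greedily with room to spare.

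The main obstacle is closing the cycle. The wrap-around edge $e_n$ and the vertex $v_n$ are adjacent to elements colored both early (near $v_1$) and late, so that by the time we would color $e_n$ all six of its neighbors are already colored, whereas its list has only five colors. To handle this I would not color blindly around the loop; instead I would keep a bounded set of elements near the cut uncolored (for instance $e_n$ together with an incident halfedge, and if necessary an adjacent vertex), color everything else by the ordering above, and then complete the coloring of this constant-size gadget by hand. The point is that the uncolored gadget has only a few elements, while the surplus in the vertex and edge lists supplies extra free colors beyond what a bare cycle would provide, so the completion can always be carried out.

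I expect this closure step to be the crux, and in particular to require a short parity case analysis. A cycle with lists of size $2$ is $2$-choosable only when it is even, so the argument cannot rest on cycle $2$-choosability alone; the odd case is exactly where the extra colors in the size-$4$ and size-$5$ lists must be spent. Making the reservation precise — choosing which few elements to leave uncolored so that the residual lists at the cut are provably large enough, for both parities of $n$ — is the delicate part, and is where I would concentrate the detailed verification.
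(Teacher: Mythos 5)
Your setup is correct (the degree counts in $T(H)$ are right, and you correctly observe that every list has size exactly one less than the corresponding degree, so neither greedy coloring nor degree-choosability applies), but the proposal is not a proof: the entire content of the lemma is concentrated in the closure step, and you explicitly defer it. The assertion that ``the surplus in the vertex and edge lists supplies extra free colors beyond what a bare cycle would provide, so the completion can always be carried out'' is precisely what must be proved, and it is not true for the naive reservations you suggest. For instance, if you color everything greedily in your order except the gadget $\{h_n, v_n, e_n\}$, then each of these three elements can end up with only one available color ($h_n$ loses a color to $e_{n-1}$; $v_n$ loses colors to $v_{n-1}$, $v_1$, $e_{n-1}$; $e_n$ loses colors to $v_1$, $e_{n-1}$, $e_1$, $h_1$), and they form a triangle in $T(H)$, which needs three distinct colors. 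Enlarging the gadget does not obviously help, because the surplus is only one per element everywhere, and the obstruction is not localized at the cut: if one first colors all halfedges arbitrarily, the remaining graph is $T(C_n)\cong C_{2n}^2$ with lists of size at least $3$, and $C_{2n}^2$ is in general not even properly $3$-colorable (its chromatic number is $4$ when $3\nmid 2n$). So any correct proof must coordinate the halfedge and vertex colors globally around the cycle (for example, reusing vertex colors on halfedges so that each cycle edge retains at least two available colors, then invoking $2$-choosability of even cycles, with a separate device for odd $n$), not merely patch a constant-size region.

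For comparison: the paper you are being checked against does not prove this lemma at all; it quotes it from Juvan, Mohar, and \v{S}krekovski, whose proof carries out exactly the kind of global coordination sketched above. Your plan identifies the right difficulty and the right parity obstruction, but stops at the point where the actual argument begins, so as it stands there is a genuine gap.
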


If $G$ has girth at least 5, we will show (in Lemma~\ref{structural}) that $G$ contains an induced cycle $\cyc$ with a matching incident to the vertices of $\cyc$.  
In this case, we will greedily color all the elements of $G$ except for $\cyc$ and the incident matching.  
By treating the edges of the matching as halfedges, 
we use Lemma~\ref{juvan1} to finish the coloring (we give the details in Theorem~\ref{main}).
The next two lemmas consider the cases when $G$ has girth at most 4.  
In each case we find a small subgraph $H$ and greedily total-color $G-H$; in Lemmas~\ref{juvan2} and~\ref{small} we show that 
we are able to extend the coloring to $H$.

In Lemma~\ref{juvan2} we refer to \textit{thick} halfedges and \textit{thin} halfedges.  Both are halfedges as described above; the only difference is that thick halfedges will receive lists of size 3, whereas thin halfedges will receive lists of size 2.  Thick halfedges always appear in pairs; they designate nonincident halfedges in $H$ that correspond to incident edges in the larger graph.
%
\vspace{-.3in}
\begin{figure}[h!bt]
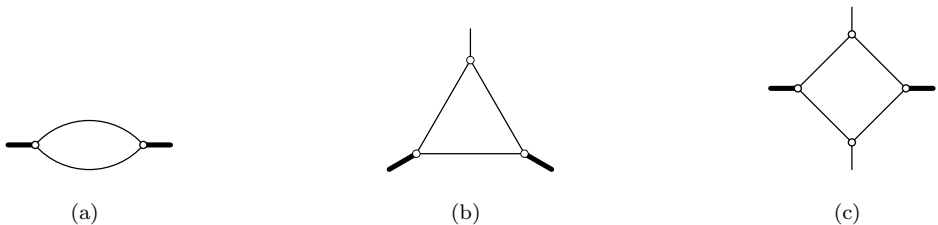

\subfloat[]{
\begin{minipage}[b]{0.30\linewidth}
\centering
\gpic{
\expandafter\ifx\csname graph\endcsname\relax \csname newbox\endcsname\graph\fi
\expandafter\ifx\csname graphtemp\endcsname\relax \csname newdimen\endcsname\graphtemp\fi
\setbox\graph=\vtop{\vskip 0pt\hbox{%
    \special{pn 8}%
    \special{ar 425 -121 378 378 0.722734 2.418858}%
    \special{ar 425 378 378 378 -2.418858 -0.722734}%
    \special{pn 28}%
    \special{pa 142 128}%
    \special{pa 0 128}%
    \special{fp}%
    \special{pa 708 128}%
    \special{pa 850 128}%
    \special{fp}%
    \special{sh 1.000}%
    \special{ia 142 128 22 22 0 6.28319}%
    \special{sh 0.000}%
    \special{ia 142 128 15 15 0 6.28319}%
    \special{sh 1.000}%
    \special{ia 708 128 22 22 0 6.28319}%
    \special{sh 0.000}%
    \special{ia 708 128 15 15 0 6.28319}%
    \hbox{\vrule depth0.256in width0pt height 0pt}%
    \kern 0.850in
  }%
}%
}
\end{minipage}}%
\subfloat[]{
\begin{minipage}[b]{0.30\linewidth}
\centering
\gpic{
\expandafter\ifx\csname graph\endcsname\relax \csname newbox\endcsname\graph\fi
\expandafter\ifx\csname graphtemp\endcsname\relax \csname newdimen\endcsname\graphtemp\fi
\setbox\graph=\vtop{\vskip 0pt\hbox{%
    \special{pn 8}%
    \special{pa 425 164}%
    \special{pa 708 654}%
    \special{pa 142 654}%
    \special{pa 425 164}%
    \special{fp}%
    \special{pa 425 164}%
    \special{pa 425 0}%
    \special{fp}%
    \special{pn 28}%
    \special{pa 708 654}%
    \special{pa 850 736}%
    \special{fp}%
    \special{pa 142 654}%
    \special{pa 0 736}%
    \special{fp}%
    \special{sh 1.000}%
    \special{ia 425 164 22 22 0 6.28319}%
    \special{sh 0.000}%
    \special{ia 425 164 17 17 0 6.28319}%
    \special{sh 1.000}%
    \special{ia 708 654 22 22 0 6.28319}%
    \special{sh 0.000}%
    \special{ia 708 654 17 17 0 6.28319}%
    \special{sh 1.000}%
    \special{ia 142 654 22 22 0 6.28319}%
    \special{sh 0.000}%
    \special{ia 142 654 17 17 0 6.28319}%
    \hbox{\vrule depth0.736in width0pt height 0pt}%
    \kern 0.850in
  }%
}%
}
\end{minipage}}%
\subfloat[]{
\begin{minipage}[b]{0.30\linewidth}
\centering
\gpic{
\expandafter\ifx\csname graph\endcsname\relax \csname newbox\endcsname\graph\fi
\expandafter\ifx\csname graphtemp\endcsname\relax \csname newdimen\endcsname\graphtemp\fi
\setbox\graph=\vtop{\vskip 0pt\hbox{%
    \special{pn 8}%
    \special{pa 425 142}%
    \special{pa 708 425}%
    \special{pa 425 708}%
    \special{pa 142 425}%
    \special{pa 425 142}%
    \special{fp}%
    \special{pa 425 142}%
    \special{pa 425 0}%
    \special{fp}%
    \special{pn 28}%
    \special{pa 708 425}%
    \special{pa 850 425}%
    \special{fp}%
    \special{pn 8}%
    \special{pa 425 708}%
    \special{pa 425 850}%
    \special{fp}%
    \special{pn 28}%
    \special{pa 142 425}%
    \special{pa 0 425}%
    \special{fp}%
    \special{sh 1.000}%
    \special{ia 425 142 22 22 0 6.28319}%
    \special{sh 0.000}%
    \special{ia 425 142 15 15 0 6.28319}%
    \special{sh 1.000}%
    \special{ia 708 425 22 22 0 6.28319}%
    \special{sh 0.000}%
    \special{ia 708 425 15 15 0 6.28319}%
    \special{sh 1.000}%
    \special{ia 425 708 22 22 0 6.28319}%
    \special{sh 0.000}%
    \special{ia 425 708 15 15 0 6.28319}%
    \special{sh 1.000}%
    \special{ia 142 425 22 22 0 6.28319}%
    \special{sh 0.000}%
    \special{ia 142 425 15 15 0 6.28319}%
    \hbox{\vrule depth0.850in width0pt height 0pt}%
    \kern 0.850in
  }%
}%
}
\end{minipage}}%
\caption{(a) A double edge with each endpoint incident to a thick halfedge.  
(b) A 3-cycle with two vertices incident to thick halfedges and the third vertex incident to a thin halfedge.
(c) A 4-cycle with two nonadjacent vertices incident to thick halfedges and the other two vertices incident to thin halfedges.
}
\end{figure}
%
\begin{lemma}(\cite{juvan})
\label{juvan2}
Let $H$ be isomorphic to one of the multigraphs in Figure 1.  If $L$ is a list assignment for $H$ such that 
\begin{eqnarray*}
|L(t)|\geq \left\{
\begin{array}{ll}
5, & \mbox{if $t$ is a proper edge,} \\
4, & \mbox{if $t$ is a vertex,} \\
3, & \mbox{if $t$ is a thick halfedge} \\
2, & \mbox{if $t$ is a thin halfedge,} 
\end{array}
\right.
\end{eqnarray*}
then $H$ admits an $L$-total-coloring such that the two thick halfedges receive distinct colors.
\end{lemma}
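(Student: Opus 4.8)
The plan is to treat the three configurations of Figure 1 separately, and in each case to reduce the statement to an ordinary list-coloring of the \emph{constraint graph} $\Gamma$: its vertices are the elements of $H$ (the cycle vertices, the proper or parallel edges, and the halfedges), two of them are joined whenever the corresponding elements are adjacent or incident, and one extra edge joins the two thick halfedges to encode that they must receive distinct colors. An $L$-coloring of $\Gamma$ is exactly the desired total-coloring of $H$, so it suffices to produce one.

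The first thing I would record is that $\Gamma$ is tight. In configuration (a) the two thick halfedges have list size one less than their degree in $\Gamma$, while the two vertices and the two parallel edges have list size exactly equal to their degree; in configurations (b) and (c) every element has list size one less than its degree. Hence whatever element is colored last forces a coincidence among its already-colored neighbors — one coincidence if its list equals its degree, and two if its list is one smaller. The heart of the argument is to color the earlier elements so as to create exactly these coincidences. In (a) I would leave one parallel edge $e$ for last; its five neighbors are its two endpoints, the other parallel edge, and the two thick halfedges, and each endpoint is non-adjacent in $\Gamma$ to the thick halfedge at the \emph{other} endpoint, so planting a single common color on such a pair cuts the neighbor palette of $e$ to four colors and leaves $e$ a free one. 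The two thick halfedges, which are the only elements short by one, are colored earlier, when their incident parallel edges are still uncolored, so they never run out of room. In (b) and (c), where every element is short by one, I would leave one proper cycle-edge $f$ for last and force \emph{two} coincidences among its six neighbors, using that $f$ has (at least) two pairs of mutually $\Gamma$-non-adjacent elements in its neighborhood — for instance a halfedge together with a cycle-edge not incident to that halfedge's vertex — so that planting a common color on each pair again brings the palette on $f$ down to four.

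To realize the coincidences I would color in roughly this order: first the cycle vertices, which form a short cycle or a single edge and are trivially colored from their lists of size $4$; then each thin halfedge, placed so that at the moment it is colored it sees only its single incident vertex (its two incident edges still uncolored), so its list of size $2$ suffices; then the thick halfedges and all proper edges except $f$, choosing their colors as we go in order to plant the targeted equalities; and finally $f$. Each intermediate step is easily seen not to fail: a thick halfedge, when colored, is adjacent only to its endpoint and possibly the other thick halfedge, well within its list of size $3$, and every proper edge other than $f$ sees fewer already-colored neighbors than the size $5$ of its list.

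The main obstacle is to force the two coincidences (or, in (a), the one) while respecting the two genuinely binding constraints: the thick halfedges must differ, and the thin halfedges carry only two colors. A targeted equality can be planted outright only when the two lists in question meet, so I expect a short, finite case analysis according to whether the lists of each targeted pair intersect. When they do, the common color is assigned and the counting above goes through; when a targeted pair has disjoint lists its two elements are automatically distinct, and this rigidity lets me retarget a different non-adjacent pair whose lists do meet. Because each configuration has at most nine elements, verifying that the two required coincidences are always available — and that the plan never conflicts with the thick-halfedge distinctness or exhausts a size-$2$ list — is a bounded and essentially mechanical check once the ordering and coincidence targets are fixed.
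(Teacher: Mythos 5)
You should note first that the paper never proves this lemma at all: it is quoted from Juvan, Mohar, and \v{S}krekovski, so your argument has to stand entirely on its own. As written it does not, for two concrete reasons. The first is your treatment of disjoint lists. You claim that when a targeted non-adjacent pair has disjoint lists you may ``retarget a different non-adjacent pair whose lists do meet,'' but nothing guarantees that \emph{any} non-adjacent pair has intersecting lists. In Figure~1a, writing $v_1,v_2$ for the vertices, $f_1,f_2$ for the thick halfedges at $v_1,v_2$, and $e$ for the parallel edge you color last, the only non-adjacent pairs inside the neighborhood of $e$ are $\{v_1,f_2\}$ and $\{v_2,f_1\}$, and all four of these lists can be pairwise disjoint. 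What saves the day is not a coincidence but a union bound: since $|L(v_1)|+|L(f_2)|=7>5=|L(e)|$, either the two lists meet, or some color of $L(v_1)\cup L(f_2)$ lies outside $L(e)$, and coloring that element with the escaping color removes a constraint from $e$ just as effectively as a coincidence. This ``common color or escape color'' dichotomy --- exactly the device the paper itself uses when proving Lemma~\ref{small} for Figure~2a --- is absent from your proposal, and without it your case analysis cannot be completed.

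The second gap is your claim that ``each intermediate step is easily seen not to fail,'' in particular that every proper edge other than the last one sees fewer colored neighbors than its list size. In configurations (b) and (c) every element of the constraint graph $\Gamma$ has list size exactly one less than its $\Gamma$-degree, and the claim is false. Write the 3-cycle of (b) as $v_1v_2v_3$ with $e_{ij}$ joining $v_i$ and $v_j$, thick halfedges $f_1,f_2$ at $v_1,v_2$, the thin halfedge $g$ at $v_3$, and $f=e_{12}$ colored last. The edges $e_{12},e_{13},e_{23}$ are pairwise adjacent in $\Gamma$, so whichever of $e_{13},e_{23}$ is colored second sees five already-colored neighbors (two cycle vertices, the other non-last cycle edge, a thick halfedge, and $g$) against a list of size five. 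In (c) the situation is worse: however you order the three cycle edges other than $f$, the last of them sees at least five colored neighbors, and if that edge is the one opposite $f$, all six of its neighbors are already colored. Moreover, planting coincidences to rescue $f$ can create the same deficit elsewhere: in (b), planting common colors on the pairs $\{f_1,e_{23}\}$ and $\{f_2,e_{13}\}$ forces every cycle vertex to see both planted colors, so whichever cycle vertex is colored last can face four distinct colors against a list of size four. So savings must be arranged simultaneously for several tight elements, subject to the thick-halfedge distinctness and to the adjacencies among the planted pairs; this global bookkeeping, organized by cases on which lists intersect, is precisely the content of Juvan et al.'s proof. Deferring it to ``a bounded and essentially mechanical check'' whose guiding invariants are false leaves the heart of the lemma unproved.
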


Juvan et al.\ were the first to show that a graph $G$ with $\Delta(G)=3$
is totally-5-choosable.  Most of the subgraphs $H$ that we encounter in our proof also arose in their proof (as seen in Lemmas~\ref{juvan1} and~\ref{juvan2}).  However, there are a few additional subgraphs that we must deal with as we prove our generalization of their result.  We handle these subgraphs in the following lemma.  

\vspace{-.1in}
\begin{figure}[h!bt]
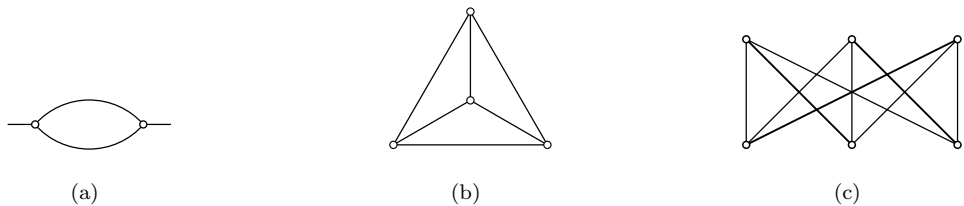

\subfloat[]{
\begin{minipage}[b]{0.30\linewidth}
\centering
\gpic{
\expandafter\ifx\csname graph\endcsname\relax \csname newbox\endcsname\graph\fi
\expandafter\ifx\csname graphtemp\endcsname\relax \csname newdimen\endcsname\graphtemp\fi
\setbox\graph=\vtop{\vskip 0pt\hbox{%
    \special{pn 8}%
    \special{ar 425 -121 378 378 0.722734 2.418858}%
    \special{ar 425 378 378 378 -2.418858 -0.722734}%
    \special{pa 142 128}%
    \special{pa 0 128}%
    \special{fp}%
    \special{pa 708 128}%
    \special{pa 850 128}%
    \special{fp}%
    \special{sh 1.000}%
    \special{ia 142 128 22 22 0 6.28319}%
    \special{sh 0.000}%
    \special{ia 142 128 15 15 0 6.28319}%
    \special{sh 1.000}%
    \special{ia 708 128 22 22 0 6.28319}%
    \special{sh 0.000}%
    \special{ia 708 128 15 15 0 6.28319}%
    \hbox{\vrule depth0.256in width0pt height 0pt}%
    \kern 0.850in
  }%
}%
}
\end{minipage}}%
\subfloat[]{
\begin{minipage}[b]{0.30\linewidth}
\centering
\gpic{
\expandafter\ifx\csname graph\endcsname\relax \csname newbox\endcsname\graph\fi
\expandafter\ifx\csname graphtemp\endcsname\relax \csname newdimen\endcsname\graphtemp\fi
\setbox\graph=\vtop{\vskip 0pt\hbox{%
    \special{pn 8}%
    \special{pa 425 22}%
    \special{pa 828 720}%
    \special{pa 22 720}%
    \special{pa 425 22}%
    \special{fp}%
    \special{pa 425 22}%
    \special{pa 425 487}%
    \special{fp}%
    \special{pa 828 720}%
    \special{pa 425 487}%
    \special{fp}%
    \special{pa 22 720}%
    \special{pa 425 487}%
    \special{fp}%
    \special{sh 1.000}%
    \special{ia 425 487 22 22 0 6.28319}%
    \special{sh 0.000}%
    \special{ia 425 487 16 16 0 6.28319}%
    \special{sh 1.000}%
    \special{ia 425 22 22 22 0 6.28319}%
    \special{sh 0.000}%
    \special{ia 425 22 16 16 0 6.28319}%
    \special{sh 1.000}%
    \special{ia 828 720 22 22 0 6.28319}%
    \special{sh 0.000}%
    \special{ia 828 720 16 16 0 6.28319}%
    \special{sh 1.000}%
    \special{ia 22 720 22 22 0 6.28319}%
    \special{sh 0.000}%
    \special{ia 22 720 16 16 0 6.28319}%
    \hbox{\vrule depth0.742in width0pt height 0pt}%
    \kern 0.850in
  }%
}%
}
\end{minipage}}%
\subfloat[]{
\begin{minipage}[b]{0.30\linewidth}
\centering
\gpic{
\expandafter\ifx\csname graph\endcsname\relax \csname newbox\endcsname\graph\fi
\expandafter\ifx\csname graphtemp\endcsname\relax \csname newdimen\endcsname\graphtemp\fi
\setbox\graph=\vtop{\vskip 0pt\hbox{%
    \special{pn 8}%
    \special{pa 22 22}%
    \special{pa 22 575}%
    \special{pa 575 22}%
    \special{pa 575 575}%
    \special{pa 1128 22}%
    \special{pa 1128 575}%
    \special{pa 22 22}%
    \special{fp}%
    \special{pn 11}%
    \special{pa 22 22}%
    \special{pa 575 575}%
    \special{fp}%
    \special{pa 575 22}%
    \special{pa 1128 575}%
    \special{fp}%
    \special{pa 1128 22}%
    \special{pa 22 575}%
    \special{fp}%
    \special{sh 1.000}%
    \special{ia 22 22 22 22 0 6.28319}%
    \special{sh 0.000}%
    \special{ia 22 22 14 14 0 6.28319}%
    \special{sh 1.000}%
    \special{ia 575 22 22 22 0 6.28319}%
    \special{sh 0.000}%
    \special{ia 575 22 14 14 0 6.28319}%
    \special{sh 1.000}%
    \special{ia 1128 22 22 22 0 6.28319}%
    \special{sh 0.000}%
    \special{ia 1128 22 14 14 0 6.28319}%
    \special{sh 1.000}%
    \special{ia 22 575 22 22 0 6.28319}%
    \special{sh 0.000}%
    \special{ia 22 575 14 14 0 6.28319}%
    \special{sh 1.000}%
    \special{ia 575 575 22 22 0 6.28319}%
    \special{sh 0.000}%
    \special{ia 575 575 14 14 0 6.28319}%
    \special{sh 1.000}%
    \special{ia 1128 575 22 22 0 6.28319}%
    \special{sh 0.000}%
    \special{ia 1128 575 14 14 0 6.28319}%
    \hbox{\vrule depth0.597in width0pt height 0pt}%
    \kern 1.150in
  }%
}%
}
\end{minipage}}%
\caption{(a) A double edge with each endpoint incident to a thin halfedge.  
(b) A complete graph on 4 vertices.
(c) A complete bipartite graph with each part of size 3.
}
\end{figure}
%

\begin{lemma}
\label{small}
Let $H$ be isomorphic to one of the multigraphs in Figure~2.  If $L$ is a list
assignment for $H$ such that 
\begin{eqnarray}
|L(t)|\geq \left\{
\begin{array}{ll}
5, & \mbox{if $t$ is a proper edge,} \\
4, & \mbox{if $t$ is a vertex,} \\
2, & \mbox{if $t$ is a halfedge,} 
\end{array}
\right.
\end{eqnarray}
Then $H$ admits an $L$-total-coloring.
\end{lemma}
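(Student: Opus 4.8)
The plan is to treat the three configurations of Figure~2 separately, but in each case to first color the vertices of $H$ and then color the edges, exploiting the fact that two edges of $G$ sharing no endpoint are \emph{non-adjacent} in $T(H)$ and so may reuse a color. Write $c(x)$ for the color chosen on element $x$. In every case the vertices are easy to handle: in (b) the four vertices induce $K_4$, which is $4$-choosable by a greedy argument, and in (c) the six vertices induce the bipartite graph $K_{3,3}$, which is certainly colorable from lists of size $4$ (its vertex graph has maximum degree $3$). Once the vertices are colored, each proper edge $uv$ loses only the two colors $c(u)$ and $c(v)$, so it retains a list of size at least $5-2=3$, and the remaining task is to properly color the edges of $G$ subject only to the requirement that edges meeting at a common vertex receive different colors -- that is, to produce a \emph{list edge coloring} of $G$ from lists of size at least $3$.

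For (b) and (c) this reduces the lemma to a statement about list edge coloring, and since $\chi'(K_4)=\chi'(K_{3,3})=3$ it suffices to show that each of $K_4$ and $K_{3,3}$ is $3$-edge-choosable. For the bipartite graph $K_{3,3}$ this is immediate from Galvin's theorem, which states that every bipartite multigraph is $\chi'$-edge-choosable. For $K_4$, whose line graph is the octahedron $K_{2,2,2}$, I would argue directly: if some matching $\{e,e'\}$ of $K_4$ (a pair of non-adjacent edges) has $L(e)\cap L(e')\neq\emptyset$, color both edges of that pair with a common color and finish, since the remaining four edges then form a $C_4$ with lists of size at least $2$; otherwise the three matchings have pairwise-disjoint internal lists, whence Hall's condition holds (any matched pair already contributes $6$ colors to every union of four or more lists), so the six edge-lists admit a system of distinct representatives, giving a rainbow -- hence proper -- edge coloring. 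This settles (b) and (c).

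For (a) the same outline applies, but the halfedges complicate the edge step: each of the two parallel edges $e_1,e_2$ is adjacent in $T(H)$ to both halfedges as well as to $u$ and $w$, so coloring $e_1,e_2$ last removes up to four colors from each, while a double edge requires two distinct colors. The key device is to color the four elements $h_u,u,w,h_w$ -- whose adjacencies in $T(H)$ form the path $h_u-u-w-h_w$ -- using at most \emph{three} distinct colors in total, so that each of $e_1,e_2$ keeps a reduced list of size at least $2$; the parallel pair, having $\chi'=2$, can then be colored. Because these four elements form only a path, I have the freedom to reuse a color on one of the non-adjacent pairs $(h_u,h_w)$, $(h_u,w)$, or $(u,h_w)$, and a short case analysis on the intersections of the size-$2$ halfedge lists with the size-$4$ vertex lists produces such a $3$-colored configuration.

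The main obstacle I anticipate is exactly this edge step: a naive greedy coloring that colors the edges last fails, because in $T(H)$ each edge has list size equal to its degree (in (a)) or one less (in (b),(c)). Two ideas overcome it. First, reusing colors on non-adjacent edges recasts the edge step as a genuine list edge coloring problem, letting me invoke $\chi'(K_4)=\chi'(K_{3,3})=3$ through Galvin's theorem and the Hall/$C_4$ argument above. Second, for (a), budgeting the four non-edge elements into three colors leaves the required slack, with a single degenerate subcase -- both reduced edge-lists equal to one common color -- cleared by recoloring $u$ or $w$ using the room in its size-$4$ list.
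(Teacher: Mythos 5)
Your treatment of cases (b) and (c) is correct and is essentially the paper's own argument: color the vertices greedily, observe each edge retains at least $5-2=3$ colors, invoke Galvin's theorem for $K_{3,3}$, and for $K_4$ use the dichotomy between a nonincident pair of edges sharing an available color (finish on the leftover $C_4$ with lists of size $2$, which is edge-$2$-choosable since even cycles are) and Hall's condition holding (finish with a rainbow coloring). Your organization is the contrapositive of the paper's, but the content is identical.

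Case (a), however, contains a genuine gap. Your key device --- that the path $h_u\hbox{--}u\hbox{--}w\hbox{--}h_w$ in $T(H)$ can always be properly colored using at most three distinct colors --- is false. Take $L(h_u)=\{1,2\}$, $L(u)=\{5,6,7,8\}$, $L(w)=\{9,10,11,12\}$, $L(h_w)=\{3,4\}$: all four lists are pairwise disjoint, so no nonadjacent pair can reuse a color and every proper coloring of these four elements uses exactly four distinct colors. Your fallback (recoloring $u$ or $w$ when ``both reduced edge-lists equal one common color'') does not address this situation; indeed it cannot even arise if the three-color device succeeds, since then each parallel edge retains at least $5-3=2$ colors. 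What actually matters is not how many distinct colors the path uses, but how many of those colors land inside $L(e_1)$ and $L(e_2)$; in my example one must choose the path colors to largely avoid the edges' lists, a consideration absent from your sketch. The paper controls this directly with a pigeonhole trick you are missing: since $|L(v_1)|+|L(e_4)|=6>5=|L(e_1)|$ (where $e_4$ is the halfedge at the \emph{other} vertex $v_2$, hence nonadjacent to $v_1$ in $T(H)$), either $v_1$ and $e_4$ share an available color, which is then used on both, or one of them has an available color outside $L(e_1)$, which is used there; either way $e_1$ loses at most one color to the pair $\{v_1,e_4\}$, and then coloring $e_3$, $v_2$, $e_2$, $e_1$ greedily in that order completes the coloring. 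To repair your proof of (a) you would need to replace the three-color claim with an argument of this kind.
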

\begin{proof}

If $H$ is congruent to the multigraph in Figure 2a, then
let $e_1$ and $e_2$ be the parallel edges.  Let the halfedges be $e_3$ (incident to vertex $v_1$) and $e_4$ (incident to vertex $v_2$).  
We may assume that there are equalities in (1); otherwise we discard colors from the lists.

Since $|L(v_1)|+|L(e_4)|>|L(e_1)|$, either $v_1$ and $e_4$ have a common color $c$ available, or there exists
a color $c\in (L(v_1)\cup L(e_4))\setminus L(e_1)$.  If color $c$ is available on both $v_1$ and $e_4$, we use it on both elements;
otherwise use color $c$ on the element where it is available (either $v_1$ or $e_4$) and color the other element arbitrarily.  
Note that $L(e_1)$ has 4 elements other than the colors used on $v_1$ and $e_4$.  Hence, after greedily coloring $e_3$, $v_2$, and $e_2$
in order, a color remains available for $e_1$.

If $H\cong K_4$ (as shown in Figure 2b), then greedily color the vertices of $H$ in some order.  
Note that each edge now has at least 3 colors available (since each edge lost at most one color to each endpoint); let $L'(e)$ denote the list of remaining available colors on each edge $e$ after coloring the vertices of $H$.
Suppose that we cannot give distinct colors to the edges.
By Hall's Theorem there exists a set $E_1$ of edges such that $|\cup_{e\in E_1}L'(e)|<|E_1|$.
Set $E_1$ must contain at least 4 edges, since each edge has at least 3 colors available.
Among any 4 edges of $K_4$ there are two nonincident edges, call them $e_1$ and $e_2$. 
Since $|L(e_1)\cup L(e_2)| < |E_1|\leq 6$, edges $e_1$ and $e_2$ must have a common available color, $c$.  
Use color $c$ on edges $e_1$ and $e_2$.
The four remaining uncolored edges form a 4-cycle.
Note that each uncolored edge has at least 2 colors available.
Erd\"{o}s, Rubin, and Taylor~\cite{ERT} proved that cycles of even length are
edge-2-choosable, so we can finish the coloring.

If $H\cong K_{3,3}$ (as shown in Figure 3b), 
then greedily color the vertices of $H$.  
Note that each edge now has at least 3 colors available.  
Galvin~\cite{galvin} proved that a bipartite multigraph $H$ is edge-$\Delta(H)$-choosable.
Hence, we can finish the coloring.
\end{proof}

Our final lemma is structural.  We use it to show that every multigraph 
$G$ contains a subgraph $H$ for which we can extend a total-$(2\Delta(G)-1)$-list-coloring of $G-H$ to such a coloring of $G$.
For convenience we consider a double-edge to be a cycle of length 2.

\begin{lemma}
\label{structural}
If $G$ is a regular multigraph,
then $G$ contains an induced cycle $\cyc$ with the following property.  
If the cycle $\cyc$ contains any pair of vertices with a common neighbor not on $\cyc$, then $|V(\cyc)|\leq 4$.  
Furthermore, we can find such a cycle in linear time.
\end{lemma}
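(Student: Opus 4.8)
The plan is to show that a \emph{shortest} cycle of $G$ already has the desired property, and then to explain how such a cycle (or an equally good substitute) can be located in linear time. We may assume $G$ has a cycle at all, which holds since $G$ is regular of degree at least $2$. First I would fix notation: let $\cyc$ be a cycle of $G$ of minimum length, where (as stipulated) a double edge counts as a cycle of length $2$. Since a chord of $\cyc$, or a parallel edge joining two of its vertices, would split $\cyc$ into two strictly shorter cycles, minimality forces $\cyc$ to be induced; so it remains only to verify the common-neighbor condition.

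The heart of the argument is a short rerouting observation. Suppose $|V(\cyc)|=\ell\geq 5$ and, for contradiction, that two vertices $a,b\in V(\cyc)$ have a common neighbor $z\notin V(\cyc)$. The two arcs of $\cyc$ joining $a$ and $b$ have lengths $d$ and $\ell-d$ with $1\leq d\leq \ell-1$. Replacing either arc by the length-$2$ path $a$--$z$--$b$ yields a cycle: of length $d+2$ in one case and $(\ell-d)+2$ in the other, and both really are cycles because $z$ lies off $\cyc$ and the two arcs share only $a$ and $b$. Minimality of $\cyc$ gives $d+2\geq \ell$ and $(\ell-d)+2\geq \ell$; adding these two inequalities yields $\ell+4\geq 2\ell$, i.e.\ $\ell\leq 4$, a contradiction. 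Hence when $|V(\cyc)|\geq 5$ no two vertices of $\cyc$ share an off-cycle neighbor, and when $|V(\cyc)|\leq 4$ the conclusion $|V(\cyc)|\leq 4$ holds outright; either way $\cyc$ satisfies the lemma.

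For the linear-time claim I would avoid computing the girth globally (which is not known to be linear), and instead \emph{shorten} an arbitrary cycle, using the same rerouting to guarantee progress. Start from any cycle found by a single depth-first search in $O(n)$ time; its length may be large, which is harmless. Then repeatedly apply, to the current cycle of length $\ell$: (i) if it has a chord, replace it by the shorter of the two cycles the chord creates, giving length at most $\lfloor \ell/2\rfloor+1$; (ii) if it is induced but some pair of its vertices shares an off-cycle neighbor $z$, reroute through $z$ along the \emph{shorter} arc, giving length at most $\lfloor \ell/2\rfloor+2$; and in either case, whenever the resulting cycle has length at most $4$ we discard any chords and return it. Because bounded degree lets us test a length-$\ell$ cycle for chords and for shared off-cycle neighbors in $O(\ell)$ time (scan each cycle vertex's $O(\Delta)$ neighbors, marking), and each step shrinks the length by a constant factor once $\ell\geq 8$, the total work telescopes to $O(n)$. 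The process strictly decreases the length at every step and so terminates at an induced cycle that is either good or of length at most $4$.

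The step I expect to be the main obstacle is precisely this linear-time guarantee: the existence half is a one-line minimality argument, but to beat a quadratic girth computation one must ensure the shortening procedure drives the length down geometrically rather than one unit at a time. That is exactly what forces the choice of the \emph{shorter} arc (in the reroute) and the \emph{shorter} piece (at a chord) at every step, and the bounded-degree hypothesis is what makes each individual test cheap enough for the geometric series to sum to $O(n)$.
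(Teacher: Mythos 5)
Your proposal is correct, but it takes a genuinely different route from the paper, particularly on the algorithmic side. The paper anchors minimality at a single vertex: it runs one BFS to find a shortest cycle $\cycl$ \emph{through a fixed vertex} $v$, and then argues that any pair $w,x$ on $\cycl$ with a common neighbor $y$ off $\cycl$ must be at distance at most $2$ along the arc of $\cycl$ avoiding $v$ (else rerouting through $y$ shortens the cycle through $v$); in that case it simply discards $\cycl$ and returns the $3$- or $4$-cycle on $\{w,x,y\}$ or $\{w,z,x,y\}$, and otherwise returns $\cycl$ itself. You instead split the two claims: for existence you use a \emph{globally} shortest cycle and the neat symmetric inequality ($d+2\geq\ell$ and $(\ell-d)+2\geq\ell$, hence $\ell\leq 4$), which shows a girth-realizing cycle already works with no case analysis; and for the algorithm you give an iterative shortening loop (chord-split or reroute along the shorter arc, always halving-plus-constant) whose geometric length decrease makes the total work telescope to $O(n)$. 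Both arguments rest on the same rerouting idea, but the trade-off is real: the paper's construction is one-shot and shorter, yet it leans on the assertion that BFS finds an exactly shortest cycle through $v$ in linear time (a point with known subtleties, which the paper does not elaborate), and its extracted $4$-cycle still needs a chord check to be induced; your loop never computes any shortest cycle, is self-certifying on termination (the returned cycle is induced and either short or has no shared off-cycle neighbor by the exit condition), and so gives a more robust justification of the linear-time claim at the cost of a longer argument and the bookkeeping about marking/unmarking and the $\ell\geq 8$ threshold.
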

\begin{proof}
If $G$ has a multiple edge, then $G$ has a cycle of size 2.  We can find it in linear time.  So we assume $G$ is simple.

Choose an arbitrary vertex $v$.  Using breadth-first-search, find a shortest cycle $\cycl$ through $v$.
If there exist vertices $w$ and $x$ on $\cycl$ with a common neighbor $y$ not on $\cycl$, then either $w$ and $x$ are
adjacent or $w$ and $x$ have a common neighbor $z$ on $\cycl$ (otherwise we could find a shorter cycle through $v$).  
In either case let $\cyc$ be the 3-cycle or 4-cycle containing $w$, $x$, and $y$.
If $\cycl$ does not contain such a pair $w,x$, then let $\cyc=\cycl$.
\end{proof}

By combining Lemmas~\ref{main-lemma} through~\ref{structural}, we prove our main result.
\begin{theorem}
\label{main}
If $G$ is a multigraph with maximum degree $\Delta(G)$, then $G$ is totally-($2\Delta(G)-1$)-choosable.
Furthermore, given lists of size $2\Delta(G)-1$, we can construct a coloring in linear time.
\end{theorem}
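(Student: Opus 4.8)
The plan is to induct on the number of elements $|V(G)| + |E(G)|$, pushing every graph onto either Lemma~\ref{main-lemma} or one of the configurations of Lemmas~\ref{juvan1}--\ref{small}. Put $k=\Delta(G)$, so each element receives a list of size $2k-1$. First I would clear away the routine reductions. If $G$ is disconnected I color each component $C$ separately; since $\Delta(C)\le k$, induction makes $C$ totally-$(2\Delta(C)-1)$-choosable and hence totally-$(2k-1)$-choosable (discard the surplus colors; components with $\Delta(C)\le 2$ are colorable outright, since their total graphs have maximum degree at most $4<2k-1$). If $G$ is connected but has a vertex of degree less than $k$ or an edge of multiplicity at least $3$, then Lemma~\ref{main-lemma} finishes at once. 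So I may assume $G$ is connected, $k$-regular, and has all multiplicities at most $2$.

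For such a $G$ I invoke Lemma~\ref{structural} to obtain an induced cycle $\cyc$ whose only possible ``shortcut'' (two of its vertices sharing a neighbor off $\cyc$) forces $|V(\cyc)|\le 4$. I build the uncolored subgraph $H$ from $\cyc$ together with one non-cycle edge selected at each vertex of $\cyc$, each such edge viewed as a halfedge; two halfedges meeting at a common off-cycle vertex are marked thick, all others thin. (When the structure is maximally degenerate---for $k=3$ this is exactly when $G$ is $K_4$ or $K_{3,3}$---I instead take $H=G$, so that no halfedges are needed.) I then color $G-H$ greedily in order of decreasing distance to $\cyc$ in $S(G)$, exactly as in the proof of Lemma~\ref{main-lemma}: every element being colored still has at least two uncolored neighbors on its shortest path toward $\cyc$, so at most $2k-2$ colors are forbidden and each greedy step succeeds. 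This is the same fact already noted for $G-E(\cyc)$.

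The heart of the argument is then a short count showing that the colors used on $G-H$ leave every element of $H$ a list meeting the threshold of the appropriate Juvan-type lemma, and---pleasantly---these residual sizes do not depend on $k$. A proper cycle edge sees only the $2(k-3)$ colored edges at its endpoints and so keeps $2k-1-2(k-3)=5$ colors; a cycle vertex loses $(k-3)+(k-2)=2k-5$ colors and keeps $4$; a thick halfedge keeps $3$; and a thin halfedge keeps $2$. These are precisely the list sizes demanded in Lemmas~\ref{juvan1}--\ref{small}, which is why a bound established for $\Delta=3$ carries over verbatim to every $\Delta\ge 3$. After deleting surplus colors to make the sizes exact, I match $H$ to a configuration and apply the corresponding lemma: Lemma~\ref{juvan1} when $\cyc$ is long and carries an independent (all-thin) matching; Lemma~\ref{juvan2} for the thick-halfedge pictures of Figure~1 that arise from a $2$-, $3$-, or $4$-cycle with a shared off-cycle neighbor (its guarantee that the two thick halfedges get distinct colors is exactly what the incidence in $G$ requires); and Lemma~\ref{small} for the thin double edge and the degenerate $K_4$ and $K_{3,3}$.

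I expect the real work to be the bookkeeping of the girth-at-most-$4$ cases: one must check that at a $2$-, $3$-, or $4$-cycle the halfedges can always be chosen so that $H$ is isomorphic to one of the listed pictures, that the thick/thin labels agree with the incidences of $G$, and that the only way to be driven out of Figure~1 is the fully degenerate situation producing $K_4$ or $K_{3,3}$. Everything else is light: Lemma~\ref{structural} locates $\cyc$ in linear time, selecting the halfedges and reading off the reduced lists is $O(k)$ work per element, and each appeal to Lemmas~\ref{main-lemma}--\ref{small} colors a piece of bounded size, so for fixed $k$ the whole procedure runs in linear time, as claimed in Theorem~\ref{main}.
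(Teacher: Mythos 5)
Your overall strategy is indeed the paper's: reduce to the connected, $k$-regular, multiplicity-at-most-two case via Lemma~\ref{main-lemma}, find a cycle by Lemma~\ref{structural}, greedily color everything outside the cycle plus one pendant edge per cycle vertex, and finish with Lemmas~\ref{juvan1}--\ref{small}; your residual list-size counts ($5/4/3/2$) are also exactly right. The gap is that the ``bookkeeping of the girth-at-most-4 cases,'' which you defer, is precisely the substantive part of the paper's proof, and the blanket claim you make in its place --- that the only configurations falling outside Figure~1 are the degenerate ones handled by Lemma~\ref{small} --- is false. Concretely, let the 4-cycle be $abcd$, suppose the pendant edges chosen at $a$ and $c$ meet at a vertex $u$, those chosen at $b$ and $d$ meet at a vertex $v$, and $u$ is not adjacent to $v$. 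Then $H$ is a 4-cycle carrying \emph{two} pairs of thick halfedges: this is not Figure~1c (which has one thick pair and two thin halfedges), it is not $K_{3,3}$ (edge $uv$ is absent), and Lemma~\ref{juvan1} does not apply --- you cannot simply demote the thick halfedges to thin ones, because thick halfedges correspond to incident edges of $G$ and must receive distinct colors, which Lemma~\ref{juvan1} does not guarantee. For $k=3$ there is no freedom to re-choose the pendant edges, so your procedure genuinely stalls here. The paper escapes by changing the cycle: it replaces $\cyc$ with the induced 4-cycle on $(V(\cyc)\cup\{u\})\setminus\{w\}$, where $w$ is a cycle vertex nonadjacent to $u$, and uses the absence of edge $uv$ to re-choose pendant edges so that at most one pair shares an endpoint, landing back in Figure~1c. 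A second instance of the same problem: if two \emph{adjacent} vertices of the 4-cycle share an off-cycle neighbor, the picture has thick halfedges at adjacent cycle vertices, again not Figure~1c; the paper handles this by switching to the 3-cycle through the common neighbor. These cycle-replacement moves are the genuinely non-obvious ideas in the theorem's proof, and they are missing from your proposal.

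A smaller slip: your parenthetical ``I instead take $H=G$'' in the degenerate cases is correct only when $k=3$. For $k\ge 4$, the $K_4$ configuration arises whenever the three pendant edges at a 3-cycle share a common endpoint, and the $K_{3,3}$ configuration arises whenever the two shared endpoints $u,v$ at a 4-cycle are adjacent; in both situations $H$ is a proper subgraph of the much larger $k$-regular graph $G$, you still finish by Lemma~\ref{small}, and you must still verify the residual list sizes of its elements (e.g.\ the apex vertex of the $K_4$ keeps $(2k-1)-(k-3)-(k-3)=5\ge 4$ colors, so this works out). This part is easily repaired, but as written it conflates ``the uncolored configuration is degenerate'' with ``$G$ itself is degenerate.''
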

\begin{proof}
If $G$ is disconnected, then we color each component separately.
If $G$ is not $\Delta(G)$-regular or 
if $G$ contains an edge with multiplicity at least 3, then by Lemma~\ref{main-lemma} we can color $G$.

If $G$ contains an edge $uv$ with multiplicity 2, then 
let $e_1$ and $e_2$ be additional edges incident to $u$ and $v$, respectively.
We view $e_1$ and $e_2$ as halfedges (thick if they have a common endpoint, and thin otherwise).
This subgraph (call it $H$) is isomorphic to either Figure~1a or Figure~2a.

By Lemma~\ref{main-lemma}, we can greedily color $G-E(H)$.  
We will use either Lemma~\ref{juvan2} or Lemma~\ref{small} to complete the coloring.
To apply Lemma~\ref{juvan2} or Lemma~\ref{small}, we must verify that each uncolored vertex, edge, and halfedge has a sufficient number of available colors.
We do this as follows.

Let $k=\Delta(G)$.
An uncolored vertex ($u$ or $v$) is incident to at most $k-3$ colored edges and at most $k-2$ colored vertices.  So it has at least $(2k-1)-(k-3)-(k-2)=4$ available colors.
An uncolored edge (one of the $uv$ edges) is not incident to any colored vertices; each endpoint is incident to at most $k-3$ colored edges.  So an edge has at least $(2k-1)-2(k-3)=5$ available colors.  A thin halfedge ($e_1$ or $e_2$) is incident to a single colored vertex; one endpoint is incident to at most $k-1$ colored edges and the other endpoint is incident to at most $k-3$ colored edges.  So a thin halfedge has at least $(2k-1) - 1 - (k-1)-(k-3)=2$ available colors.  A thick halfedge has one additional available color, since it is incident to the other thick halfedge, which is uncolored.
Hence, we can assume that $G$ is a regular simple graph.

Find a cycle $\cyc$ as described in Lemma~\ref{structural}.  
By Lemma~\ref{main-lemma}, greedily color $G-E(\cyc)$.
If $|V(\cyc)|>4$, then uncolor the vertices of $\cyc$ and uncolor one edge incident to each vertex of $\cyc$.  
We treat each uncolored incident edge as a halfedge, then finish the coloring by Lemma~\ref{juvan1}.
To apply Lemma~\ref{juvan1}, we must verify that the lists of colors are big enough.
However, the analysis is similar to that given above, so we omit it.
Thus, if $|V(\cyc)|>4$, then Lemma~\ref{juvan1} applies and we can finish the coloring.  So we may assume that $|V(\cyc)|\leq 4$.

Suppose that $|V(\cyc)|=3$.  Uncolor an edge incident to each vertex of $\cyc$.  
If the three uncolored incident edges have a common endpoint, 
then also uncolor this common endpoint.
The uncolored subgraph is isomorphic to $K_4$.
By Lemma~\ref{small}, we can finish the coloring.
If the uncolored subgraph is not isomorphic to $K_4$, then it is isomorphic to
a subgraph in Lemma~\ref{juvan1} or Lemma~\ref{juvan2} (Figure 1b).  In each case, we can finish the coloring.  To apply Lemma~\ref{juvan1},~\ref{juvan2}, or~\ref{small} we must verify
that the lists of available colors are big enough.  
Again, we omit the case analysis.

Finally, suppose $|V(\cyc)|=4$.  If two adjecent vertices of $\cyc$ have a common neighbor not on $\cyc$ then instead we let
$\cyc$ be this 3-cycle and we are in the above case.  Otherwise, uncolor an edge incident to each vertex of $\cyc$.  If at most one
pair of these incident edges has a common endpoint, then we can finish the coloring by Lemma~\ref{juvan1} or Lemma~\ref{juvan2} (Figure 1c).
If two pairs of these incident edges have common endpoints, call these endpoints $u$ and $v$.  If $u$ and $v$ are adjacent,
then we have a subgraph isomorphic to $K_{3,3}$, so by Lemma~\ref{small} (Figure 2c) we can finish the coloring.  If $u$ and $v$ are not adjacent, 
then instead replace $\cyc$ with the cycle induced by $(V(\cyc)\cup u)\backslash\{w\}$, 
where $w$ is a vertex of $\cyc$ but $w$ is not adjacent to $u$.
Because edge $uv$ is not present in $G$, we can choose one edge incident to each vertex of this new 4-cycle so that at most one pair of incident edges has a common endpoint.
Now we are in the previous case, and we can finish by Lemma~\ref{juvan1} or Lemma~\ref{juvan2} (Figure 1c).
\end{proof}

\section*{Acknowledgements}
Thanks to Douglas Woodall for bringing this problem to my attention and for his insights that greatly improved the exposition of this paper.
Thanks to Doug West and two referees, whose comments each improved the exposition.
Thanks to Kevin Milans for helpful conversation.

\end{document}